\documentclass[12pt, a4paper]{amsart}
\usepackage{amsmath}
\usepackage{geometry,amsthm,graphics,tabularx,amssymb,
shapepar}
\usepackage{amscd}
\usepackage[usenames]{color}

% If your article includes graphics, uncomment this command.
\usepackage{graphicx}
% If the article includes commutative diagrams, ...
%\usepackage[all,2cell,dvips]{xy}

%diagrammes commutatifs
\usepackage[all]{xy}
% Update the information and uncomment if AMS is not the copyright
% holder.
%\copyrightinfo{2006}{American Mathematical Society}

%\newcommand{\N}{\mathbb{N}}

%\newcommand{\BB}{\mathcal B}

%\newcommand{\CC}{\mathcal C}

\newcommand{\be}{\begin {equation}}
\newcommand{\ee}{\end {equation}}
\newcommand{\bee}{\begin {equation*}}
\newcommand{\eee}{\end {equation*}}

\theoremstyle{Theorem}

\theoremstyle{Theorem}

\theoremstyle{Theorem}
\newtheorem{prp}{Proposition}[section]

\newtheorem{lemp}[prp]{Lemma}
\newtheorem{thmp}[prp]{Theorem}

\theoremstyle{Plain}

\theoremstyle{Definition}

\begin{document}

\title[Normalized Laplacian Spectrum]{The normalized Laplacian spectrum and eigentime identities of hype-cubes}

\author[Y. Y. Chen]{Yangyang Chen}

\address{School of Sciences, Harbin Institute of Technology, Shenzhen, 518055, China}
\email{chenyangyang@hit.edu.cn}

\author[Y. Zhao]{Yi Zhao}
\address{School of Sciences, Harbin Institute of Technology, Shenzhen, 518055, China}
\email{zhao.yi@hit.edu.cn}

%\author[Y. X. Bao]{Yixin Bao}
%\address{School of Sciences, Harbin Institute of Technology, Shenzhen, 518055, China}
%\email{baoyixin@hit.edu.cn}
\subjclass[2010]{05C50, 05C81}

\keywords{Cayley graph, Hype-cubes, Normalized Laplacian spectrum, Eigentime identity, Spanning trees}
%\thanks{B. Sun was  supported by NSFC Grants 11222101 and 11321101.}

\begin{abstract}
Many popular graph metrics encode average properties of individual network elements. Complementing these conventional graph metrics, the eigenvalue spectrum of the normalized Laplacian describes a network's structure directly at a systems level, without referring to individual nodes or connections. In this paper, we study 
the spectrum and their applications of normalized Laplacian matrices of hype-cubes, a special kind of Cayley graphs. We determine explicitly all the eigenvalues and their
corresponding multiplicities by a recursive method. By using the relation between normalized Laplacian spectrum and eigentime identity, we derive the explicit formula to the eigentime
identity for random walks on the hype-cubes and show that it grows linearly with the network size. Moreover, we compute the number of spanning trees of the hype-cubes.
\end{abstract}

 \maketitle
%\tableofcontents

\section{Introduction}
Recently, the theory of complex networks has attracted wide attention and becomes an area of great interest \cite{AB,Ne}, for its advances in the understanding of many natural and 
social systems. A central issue in the study of complex systems is to understand the topological structure and to further unveil how various structural properties affect the 
dynamical processes occurring on diverse systems \cite{GB}. From a graph-theoretic perspective, the spectrum of the standard Laplacian matrix of a network contains tremendous information 
about the underlying network, which provides useful insights into the intrinsic structural features of the network \cite{CD} and plays a fundamental role in the dynamical behavior of the network. 
For example, the resistance distance \cite{Wu}, relaxation dynamic in the framework of generalized Gaussian structure \cite{GB,SB,Sc}, fluorescence depolarization by quasiresonant
energy transfer \cite{BVJK,BVJK1,LZ}, continuous-time quantum walks \cite{ADZ,ABM,MB}, average trapping time \cite{DFW} and so on. Thus it is important to study the spectrum of standard Laplacian matrices of complex networks. For the techniques to compute the spectrum of standard Laplacian matrices of complex systems, we refer the reader to earlier works \cite{GB,BJ,JVB,Ga}.

Compared to standard Laplacian matrices, the spectrum of normalized Laplacian matrices have received little attention \cite{CLV,KK,WZ}. However, the eigenvalues and eigenvectors
of normalized Laplacian matrix of a network also contain much important information about its structure and dynamical processes. For example, the number of spanning 
tress of a connected network is determined by the product of all nonzero eigenvalues\cite{Chu}; the nonzero eigenvalues and their orthonormalized eigenvectors can be used to
describe the resistor resistance between any pair of nodes \cite{CZ}. Many interesting quantities of random walks, like mixing time \cite{AF}, Kemeny constant \cite{KS} and 
eigentime identity \cite{XY}, are related to the normalized spectrum. Moreover, many problems in chemical physics \cite{BKK,BZ} are closely related to eigenvalues and eigenvectors of normalized Laplacian matrix.

In \cite{JWZ}, Julaiti et al. studied the spectrum of normalized Laplacian matrices of a family of fractal trees and dendrimers modeled by Cayley trees, both of which were built in an iterative way. They used recursive method to determine all the eigenvalues and their corresponding multiplicities. As an application, they obtained an explicit solution to the eigentime identity for random walks. Eigentime identity for random walks were also studied in \cite{DWS},
for a family of treelike networks and polymer networks.

It should be pointed out that \cite{JWZ} did not give an explicit formula of the normalized spectrum. Instead, they gave a recursive relation governing the eigenvalues of fractal trees
at two successive generations and for the Cayley tress, the eigenvalues were described as roots of several small-degree polynomials defined recursively. Generally, it should be a difficult problem to determine explicitly the normalized Laplacian spectrum of networks. However, we believe that for vertex symmetric networks \cite{AK}, the study of normalized Laplacian spectrum will be relatively easy, due to the symmetry of the network structure. Recall that for vertex symmetric networks, there exists a good model, the celebrated Cayley graph model \cite{AK}.  The Cayley graph model has a simple mathematical characterization. It should be an interesting and fascinating problem to study the normalized Laplacian spectrum of Cayley graph networks.

In this paper, we study the normalized Laplacian spectrum of a special kind of Cayley graphs, the hype-cube or $n$-cube, which is a network of $2^n$ vertices, with degree $n$ and diameter $n$. We determine explicitly all the characteristic polynomials of the normalized Laplacian matrices of the $n$-cubes by a recursive method. Particularly, these polynomials are factorized into products of monomials and the roots of these polynomials are elegantly distributed between the closed interval from $0$ to $2$. As an application of these results, we obtain directly the eigentime identity for random walks on these $n$-cubes and the number of spanning trees.

The rest of this paper is organized as follows. In the next section, we recall briefly the Cayley graph models and a special Cayley graph, the hype-cubes or $n$-cubes.
Section \ref{nls} contains the main result of this paper, where we compute the eigenvalues of normalized Laplacian matrix of
hype-cubes. Then in Section \ref{app}, we give two applications of the normalized Laplacian spectrum, namely, the explicit formulas for eigentime identities for random walks and number of spanning trees. Finally, the last section contains our conclusions.

\section{Cayley graph models and hype-cubes}

A graph or network is denoted by $\Gamma = \Gamma(V,E)$, where $V$ is the set of vertices and $E\subset V\times V$ is the set of edges. We only consider graphs that are
finite, undirected, loop-free and devoid of multiple edges in this paper. If $(v_1,v_2) \in E$, $v_1$ and $v_2$ are adjacent. Recall that a network is said to be vertex symmetric, if for any two vertices $v$ and $w$, there exists an automorphism of the network that maps $v$ into $w$. Vertex symmetric networks have the property that the network viewed from any vertex of
the network looks the same. In such a network, congestion problems are minimized since the load will be distributed through all the vertices. It is well-known that Cayley graph model is an excellent model for vertex symmetric networks. It was shown in \cite{AK} that most vertex symmetric networks can be represented using this model, and that every vertex symmetric network can be represented by a simple extension of this model \cite[Theorem 3]{AK}. 

We recall the construction of this model briefly. Let $G$ be a finite group, with a generating subset $S$, namely, all group elements of $G$ can be expressed as a finite product of the powers of the elements in $S$. The Cayley graph of the group $G$ with respect to the subset $S$, denoted by $\mathrm{Cay}(G,S)$, has vertices that are elements of $G$ and edges that are ordered pairs $(g,gs)$ for $g \in G, s \in S$. We always require that $e\not\in S$ and $S = S^{-1}$, where $e$ is the identity element of $G$. Then $\mathrm{Cay}(G,S)$ can be taken as a simple undirected graph. For more definitions and basic results on graphs and groups we refer the reader to \cite{Big}. 

For example, let $G_n = \mathbb{Z}_{2}^{n}$ and $S_n = \{(x_1,...,x_n)\in G_n : \text{only one $x_i$ is 1}\}$, where $\mathbb{Z}_{2} = \mathbb{Z}/2\mathbb{Z}$ denotes the group with only two elements and $\mathbb{Z}_{2}^{n}$ denotes the $n$th direct product of $\mathbb{Z}_{2}$. Then $\Gamma_n = \mathrm{Cay}(G_n,S_n)$ is the well-known hype-cube or $n$-cube, which is a network of $2^n$ vertices, with degree $n$ and diameter $n$ \cite{AK}.

\section{Normalized Laplacian spectrum of hype-cubes}\label{nls}

\subsection{Normalized Laplacian matrix}

Let $\Gamma$ be a network. Denote by $A$ its adjacency matrix, the entry $A(i,j)$ of which is 1 (or 0) if nodes $i$ and $j$ are (not) adjacent in $\Gamma$. Then the standard Laplacian matrix of $\Gamma$ is defined as $L = D - A$, where $D$ is the diagonal degree matrix of $\Gamma$ with its $i$th diagonal entry being the degree of node $i$ in $\Gamma$. Since $L$ is real symmetric, all its eigenvalues are real numbers. Actually, $L$ is positive semi-definite and thus has nonnegative eigenvalues. Moreover, 0 is always an eigenvalue of $L$ and the multiplicity of the eigenvalue 0 is equal to the number of connected components of $\Gamma$. For these facts, see, for example \cite{Hal}. In this paper, we only consider connected networks. 

The normalized Laplacian matrix of $\Gamma$ is defined as $\mathcal{L} = I - D^{-1/2}AD^{-1/2}$, where $I$ denotes the identity matrix with the same order as that of $A$. 
If $a$ is a eigenvalue of $\mathcal{L}$, then $0\leq a \leq 2$, see for example \cite{Chu}. By the normalized Laplacian spectrum of a network, we mean all the eigenvalues of the normalized Laplacian matrix. Recently, it was pointed out by \cite{JWZ} that one has to treat the
standard Laplacian matrix and normalized Laplacian matrix separately, since they behave quite differently. 
The main goal of this paper is to determine the normalized Laplacian spectrum of hype-cubes.

\subsection{Normalized Laplacian spectrum of hype-cubes}
$ $

 Recall that $\Gamma_{n} = \mathrm{Cay}(G_n,S_n)$ denote the $n$-cube, where
$G_n = \mathbb{Z}_2^{n}$ and $S_n = \{(x_1,...,x_n)\in G_n : \text{only one $x_i$ is 1}\}$. We array the vertices in $\Gamma_n$ in lexicographical order. Denote by $A_n$ the corresponding
adjacency matrix and $D_n$ the diagonal degree matrix of $\Gamma_n$. Obviously, $D_n = nI_{2^n}$, where $I_{2^n}$ denotes the identity matrix of order $2^n$. The normalized Laplacian
matrix of $\Gamma_n$ is $\mathcal{L}_{n} = I_{2^n} - D_n^{-1/2}A_nD_n^{-1/2}$, which is equal to $I_{2^n} - D_n^{-1}A_n$, since the degree matrix $D_n$ is a scalar matrix.
 Let $g_n(\lambda) = \mathrm{det}(\lambda I - \mathcal{L}_n) = \mathrm{det}((\lambda - 1)I + D^{-1}A)$ be the characteristic polynomial of the normalized Laplacian matrix $\mathcal{L}_{n}$ of 
 $\Gamma_n$. We sometimes omit the subscripts if it causes no confusions. The main goal of this subsection is to find all the roots of $g_{n}(\lambda)$.
 
 Let $f_{n}(\lambda) = \mathrm{det}((\lambda -1)D_n + A_n)$. It is clear that $g_n(\lambda) = n^{-2^n}f_n(\lambda)$. Thus it suffices to find all the roots of $f_n(\lambda)$. Denote by $A_{n-1}$ the corresponding adjacency matrix of the $(n -1)$-cube $\Gamma_{n-1}$. The following observation makes it possible for us to derive all the roots of $g_{n}(\lambda)$ in a recursive way.
 
 \begin{prp}\label{blo}
 For every $n \geq 2$, we have 
 \[
 A_n = \left(
 \begin{matrix}
 A_{n-1} & I\\
 I & A_{n-1}
 \end{matrix}
 \right).
 \]
\end{prp}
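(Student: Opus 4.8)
The plan is to read off the block structure directly from the combinatorial description of adjacency in the $n$-cube together with the lexicographic ordering of the vertices. First I would record the basic fact that in $\Gamma_n = \mathrm{Cay}(G_n,S_n)$ two vertices $u,v \in \mathbb{Z}_2^n$ are adjacent precisely when $u+v \in S_n$, that is, when $u$ and $v$ differ in exactly one coordinate (they are at Hamming distance one). This is immediate from the definition of $S_n$ as the set of tuples with a single nonzero entry, and it is the only structural input about $\Gamma_n$ that the argument requires.

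Next I would exploit the lexicographic ordering. Since the first coordinate is the most significant, the first $2^{n-1}$ vertices are exactly those of the form $(0,x_2,\ldots,x_n)$ and the last $2^{n-1}$ are those of the form $(1,x_2,\ldots,x_n)$, and within each half the suffix $(x_2,\ldots,x_n)$ runs through $\{0,1\}^{n-1}$ in lexicographic order. Writing $A_n$ in the induced $2\times 2$ block form relative to this partition, I would then analyze the four blocks separately.

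For the two diagonal blocks, two vertices sharing the same first coordinate are adjacent in $\Gamma_n$ if and only if their remaining coordinates differ in exactly one place; since those remaining coordinates are ordered precisely as the vertices of $\Gamma_{n-1}$, each diagonal block is exactly $A_{n-1}$. For the off-diagonal blocks, a vertex $(0,x_2,\ldots,x_n)$ and a vertex $(1,y_2,\ldots,y_n)$ already differ in the first coordinate, so they are adjacent if and only if $(x_2,\ldots,x_n)=(y_2,\ldots,y_n)$; because both halves list these suffixes in the same order, this matches the $k$-th vertex of one half with the $k$-th vertex of the other, yielding the identity matrix $I$ in each off-diagonal block. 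Assembling the four blocks gives the claimed identity.

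The computation is entirely routine once the ordering convention is fixed; the only point requiring genuine care is the off-diagonal blocks, where one must verify that the suffix-matching produces exactly $I$ rather than some permutation of it. This is precisely where the specific choice of lexicographic ordering, with a consistent ordering of the suffixes across both halves, is essential, so I would make that dependence explicit to rule out any ambiguity. No induction is actually needed for the proposition itself, since the statement is a direct identity between $A_n$ and $A_{n-1}$; the recursive flavor enters only later, when this block form is iterated to compute the characteristic polynomials $f_n(\lambda)$.
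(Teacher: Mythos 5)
Your proof is correct and takes essentially the same route as the paper, which simply asserts that the block form ``follows directly from the lexicographical order''; you have just filled in the details (Hamming-distance-one adjacency, the split by the first coordinate, and the verification that the off-diagonal blocks are exactly $I$ because both halves list the suffixes in the same order). No discrepancy to report.
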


\begin{proof}
This follows directly from the lexicographical order of the vertices of the $n$-cube.
\end{proof}

By Proposition \ref{blo}, we have
\be\label{block}
(\lambda - 1)D_n + A_n = \left(
 \begin{matrix}
 (\lambda -1)nI + A_{n-1} & I\\
 I &  (\lambda -1)nI + A_{n-1}
 \end{matrix}
 \right).
\ee
We shall need the following elementary lemma from linear algebra.

\begin{lemp}\label{det}
Let 
\[
B = \left(
 \begin{matrix}
 A & I\\
 I & A
 \end{matrix}
 \right),
\]
where $A$ is a matrix of order $m$ and $I$ is the identity matrix of the same order. Then 
\[
\mathrm{det}(B) = \mathrm{det}(A + I)\mathrm{det}(A - I).
\]
\end{lemp}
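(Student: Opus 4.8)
The plan is to reduce $B$ to block triangular form by elementary block row and column operations, which preserve the determinant while making the desired factorization transparent. The key point that makes such operations legitimate is that every block in sight commutes with $I$, so we may add an $I$-multiple of one block row (or column) to another just as in the scalar case; each such operation is realized by left- or right-multiplication by a unipotent block matrix of determinant $1$, hence leaves $\det(B)$ unchanged.

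Concretely, I would first add the second block row to the first. Since the lower blocks are $I$ and $A$, the new first block row becomes $(A+I,\ I+A) = (A+I,\ A+I)$, while the second block row $(I,\ A)$ is unchanged; the matrix becomes $\left(\begin{matrix} A+I & A+I \\ I & A\end{matrix}\right)$. Next I would subtract the first block column from the second. The $(1,2)$ block $A+I$ is cancelled against the $(1,1)$ block $A+I$, producing a zero in the upper-right corner, while the $(2,2)$ block becomes $A-I$. The result is the block lower-triangular matrix $\left(\begin{matrix} A+I & 0 \\ I & A-I\end{matrix}\right)$, whose determinant is unchanged from that of $B$.

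Finally, the determinant of a block triangular matrix with square diagonal blocks is the product of the determinants of those diagonal blocks, which immediately yields $\det(B) = \det(A+I)\det(A-I)$, as claimed. There is essentially no obstacle here, the statement being elementary; the only point that deserves a line of justification is that the two block operations genuinely preserve the determinant, which is precisely where the commutativity of the blocks with $I$ is used. As an alternative one could invoke the standard identity $\det\left(\begin{matrix} P & Q \\ R & S\end{matrix}\right) = \det(PS - QR)$, valid whenever $R$ and $S$ commute: here $R = I$ and $S = A$ commute, so $\det(B) = \det(A^2 - I) = \det(A+I)\det(A-I)$. I would favour the block-elimination argument, since it is self-contained and keeps the proof at the elementary level the statement warrants.
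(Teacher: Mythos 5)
Your argument is correct and is essentially identical to the paper's proof, which performs exactly the same two block operations (add the second block row to the first, then subtract the first block column from the second) to reach the block lower-triangular matrix $\left(\begin{smallmatrix} A+I & 0 \\ I & A-I\end{smallmatrix}\right)$. Your write-up merely adds the (welcome) justification that these operations preserve the determinant.
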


\begin{proof}
By adding the second row of $B$ to the first row and then subtracting the first column from the second column, we have
\[
\mathrm{det}(B) = \mathrm{det}\left(
 \begin{matrix}
 A+I& 0\\
 I & A-I
 \end{matrix}
 \right) =\mathrm{det}(A+I)\mathrm{det}(A-I).
\]
\end{proof}

Recall that $f_{n}(\lambda) = \mathrm{det}((\lambda -1)D_n + A_n)$. By the block decomposition \eqref{block} and Lemma \ref{det}, we have the following recursive relation between 
$f_{n}(\lambda)$ and $f_{n-1}(\lambda)$.

\begin{prp}\label{recur}
For every $n \geq 2$, we have
\be\label{re}
f_{n}(\lambda) = f_{n-1}(\frac{n}{n-1}\lambda)f_{n-1}(\frac{n\lambda - 2}{n-1}).
\ee
\end{prp}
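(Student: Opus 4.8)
We have the definition $f_n(\lambda) = \mathrm{det}((\lambda-1)D_n + A_n)$ with $D_n = nI_{2^n}$, and by the block decomposition \eqref{block},
\[
(\lambda-1)D_n + A_n = \left(
\begin{matrix}
(\lambda-1)nI + A_{n-1} & I\\
I & (\lambda-1)nI + A_{n-1}
\end{matrix}
\right).
\]
This is exactly of the form $B = \left(\begin{smallmatrix} A & I\\ I & A\end{smallmatrix}\right)$ with $A = (\lambda-1)nI + A_{n-1}$, an $m$-by-$m$ matrix where $m = 2^{n-1}$. So the plan is simply to apply Lemma \ref{det} and then recognize each factor as a rescaled copy of $f_{n-1}$.

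**Applying Lemma \ref{det} and identifying the factors.**
By Lemma \ref{det},
\[
f_n(\lambda) = \mathrm{det}(A+I)\,\mathrm{det}(A-I),
\]
where $A = (\lambda-1)nI + A_{n-1}$. First I would compute $A+I = (\lambda-1)nI + I + A_{n-1} = \big((\lambda-1)n+1\big)I + A_{n-1}$. The goal is to write this in the form $(\mu-1)(n-1)I + A_{n-1}$ so as to match $f_{n-1}(\mu) = \mathrm{det}((\mu-1)(n-1)I + A_{n-1})$ (using $D_{n-1} = (n-1)I$). Solving $(\mu-1)(n-1) = (\lambda-1)n + 1$ gives $\mu = \frac{n\lambda}{n-1}$, so $\mathrm{det}(A+I) = f_{n-1}\!\left(\frac{n}{n-1}\lambda\right)$. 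Similarly $A-I = \big((\lambda-1)n-1\big)I + A_{n-1}$, and solving $(\nu-1)(n-1) = (\lambda-1)n - 1$ gives $\nu = \frac{n\lambda-2}{n-1}$, whence $\mathrm{det}(A-I) = f_{n-1}\!\left(\frac{n\lambda-2}{n-1}\right)$. Multiplying the two factors yields \eqref{re}.

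**The only delicate point.**
This proof is essentially a routine verification once Lemma \ref{det} is in hand, so there is no serious obstacle. The one step that needs care is the bookkeeping of the affine substitutions: one must remember that the degree matrix changes from $D_n = nI$ at level $n$ to $D_{n-1} = (n-1)I$ at level $n-1$, so the scalar multiple of $A_{n-1}$ that appears in $f_{n-1}$ carries the factor $n-1$, not $n$. Matching the scalar coefficient of $I$ in $A\pm I$ against $(\mu-1)(n-1)$ is what forces the particular arguments $\tfrac{n}{n-1}\lambda$ and $\tfrac{n\lambda-2}{n-1}$ to appear; a sign or off-by-one slip here is the only realistic source of error. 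Once the two linear solves are done correctly, substituting back and invoking Lemma \ref{det} completes the argument.
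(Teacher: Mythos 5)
Your proof is correct and follows exactly the paper's route: apply Lemma \ref{det} to the block decomposition \eqref{block} and then match each factor $\det\bigl(((\lambda-1)n\pm 1)I + A_{n-1}\bigr)$ with $f_{n-1}$ at the appropriate argument. In fact you spell out the affine substitutions $\mu = \tfrac{n}{n-1}\lambda$ and $\nu = \tfrac{n\lambda-2}{n-1}$ that the paper leaves as ``easily checked,'' and both are computed correctly.
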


\begin{proof}
By definition, 
\begin{eqnarray*}
f_{n}(\lambda) &=& \mathrm{det}((\lambda -1)D_n + A_n) \\
&=& \mathrm{det}((n(\lambda -1)+1)I + A_{n-1})\mathrm{det}((n(\lambda -1)-1)I + A_{n-1}),
\end{eqnarray*}
the right hand side of the above equation is easily checked to be 
\[
f_{n-1}(\frac{n}{n-1}\lambda)f_{n-1}(\frac{n\lambda - 2}{n-1}),
\]
finishing the proof of the proposition.
\end{proof}

It follows directly that $f_{1}(\lambda) = \lambda(\lambda - 2)$. Using the recursive relation \eqref{re}, we can determine $f_{n}(\lambda)$ explicitly.

\begin{thmp}\label{res}
For $n \geq 1$,
\be\label{main}
f_{n}(\lambda) = n^{2^n}\prod_{k=0}^{n}(\lambda - \frac{2k}{n})^{\binom{n}{k}}.
\ee
\end{thmp}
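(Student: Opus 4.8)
The plan is to prove \eqref{main} by induction on $n$, using the recursion of Proposition \ref{recur} together with the base case $f_1(\lambda) = \lambda(\lambda-2)$ noted just above the statement. For $n=1$ the claimed formula reads $1^{2}(\lambda-0)^{1}(\lambda-2)^{1} = \lambda(\lambda-2)$, which agrees with the base case, so the induction starts.

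For the inductive step I would assume the formula for $n-1$ and substitute it into the two factors on the right-hand side of \eqref{re}. The essential observation is that each of the arguments $\frac{n}{n-1}\lambda$ and $\frac{n\lambda-2}{n-1}$ carries the denominator $n-1$; collecting this denominator across all $\binom{n-1}{k}$ linear factors produces a total power $(n-1)^{-2^{n-1}}$, because $\sum_{k}\binom{n-1}{k} = 2^{n-1}$, and this exactly cancels the normalization constant $(n-1)^{2^{n-1}}$ supplied by the inductive hypothesis. After the cancellation, pulling one factor of $n$ out of each linear term $n\lambda - 2k$ regenerates the correct normalization: each of the two factors contributes $n^{2^{n-1}}$, so together they give $n^{2^n}$, as predicted by \eqref{main}.

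It then remains to match the exponents of the linear factors $\lambda - \frac{2k}{n}$. The first factor $f_{n-1}(\frac{n}{n-1}\lambda)$ yields the roots $\frac{2k}{n}$ for $0 \le k \le n-1$ with exponent $\binom{n-1}{k}$, while the second factor $f_{n-1}(\frac{n\lambda-2}{n-1})$ yields the roots $\frac{2(k+1)}{n}$ with the same exponents; after the index shift $j=k+1$ these become the roots $\frac{2j}{n}$ for $1 \le j \le n$ with exponent $\binom{n-1}{j-1}$. Combining the two products, each interior root $\frac{2k}{n}$ with $1 \le k \le n-1$ acquires exponent $\binom{n-1}{k} + \binom{n-1}{k-1}$, which by Pascal's rule equals $\binom{n}{k}$, while the extreme roots $k=0$ and $k=n$ each appear once, with exponent $1 = \binom{n}{0} = \binom{n}{n}$. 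This reproduces \eqref{main} for $n$ and closes the induction.

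The computation is entirely routine, and the only point requiring care is the bookkeeping in the second factor: the translation by $-2$ in its argument shifts every root index by one, so that the two families of roots overlap exactly on the interior indices $1,\dots,n-1$. It is precisely this overlap, resolved by Pascal's rule, that merges the two copies of $\binom{n-1}{\cdot}$ into the single binomial coefficient $\binom{n}{\cdot}$ appearing in the statement.
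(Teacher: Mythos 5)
Your proposal is correct and follows essentially the same route as the paper: induction on $n$ via the recursion \eqref{re}, cancelling the powers of $n-1$ against the inductive normalization constant, extracting $n^{2^{n-1}}$ from each of the two factors, and merging the overlapping interior roots by Pascal's rule. The bookkeeping matches the paper's displayed computation step for step.
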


\begin{proof}
We prove \eqref{main} by induction on $n$. Obviously the case $n = 1$ holds. Assume that \eqref{main} holds for $n -1$, where $n > 1$. We shall show that then it also holds for $n$.
By \eqref{re} and the induction hypothesis on $n -1$, we have
\begin{eqnarray*}
f_n(\lambda) &=& f_{n-1}(\frac{n}{n-1}\lambda)f_{n-1}(\frac{n\lambda - 2}{n-1})\\
&=& (n-1)^{2^{n-1}}\prod_{k=0}^{n-1}(\frac{n}{n-1}\lambda - \frac{2k}{n-1})^{\binom{n-1}{k}}(n-1)^{2^{n-1}}\prod_{k=0}^{n-1}(\frac{n\lambda - 2}{n-1} - \frac{2k}{n-1})^{\binom{n-1}{k}} \\
&=& (n-1)^{2^n}(\frac{n}{n-1})^{2\sum_{k=0}^{n-1}\binom{n-1}{k}}\prod_{k=0}^{n-1}(\lambda - \frac{2k}{n})^{\binom{n-1}{k}}(\lambda - \frac{2k+2}{n})^{\binom{n-1}{k}}\\
&=& n^{2^n}\prod_{k=0}^{n-1}(\lambda - \frac{2k}{n})^{\binom{n-1}{k}}\prod_{k=1}^{n}(\lambda - \frac{2k}{n})^{\binom{n-1}{k-1}} \\
&=& n^{2^n}\prod_{k=0}^{n}(\lambda - \frac{2k}{n})^{\binom{n}{k}},
\end{eqnarray*}
which completes the proof of $n$ case.
\end{proof}

Recall that $g_n(\lambda) = \mathrm{det}(\lambda I - \mathcal{L}_n)$, where $\mathcal{L}_n$ denotes the normalized Laplacian matrix of the $n$-cube $\Gamma_n$ and we have
$g_n(\lambda) = n^{-2^n}f_n(\lambda)$. By Theorem \ref{res}, 
\be\label{poly}
g_n(\lambda) = \prod_{k=0}^{n}(\lambda - \frac{2k}{n})^{\binom{n}{k}}. 
\ee
From this formula, we know all the eigenvalues of normalized Laplacian matrix of the $n$-cube. The eigenvalues are $2k/n$, with multiplicity $\binom{n}{k}$, for each $0\leq k\leq n$.
These eigenvalues are evenly distributed in the closed interval from 0 to 2. As we mentioned in the Introduction, this is partly due to the symmetry of the network structure. We believe that similar nice results hold for other vertex symmetric networks, and specially for Cayley graph networks.

\section{Applications of normalized Laplacian spectrum}\label{app}

As described in the Introduction, the normalized Laplacian spectrum of a network contains much important information about its structure and dynamical processes.
With the normalized Laplacian spectrum of hype-cubes obtained, now we can give explicit formulas to the eigentime identity for random walks on the $n$-cube $\Gamma_{n}$ 
and the number of spanning trees of $\Gamma_{n}$.

\subsection{Eigentime identity for random walks}

Firstly we recall the eigentime identity for random walks in a general network $\Gamma$. Let $H_{ij}$ be the mean-first passage time from node $i$ to node $j$ in $\Gamma$, which is the expected time for a particle starting off from node $i$ to arrive at node $j$ for the first time, see \cite{JWZ}. The stationary distribution for random walks on 
$\Gamma$ is $\pi = (\pi_1, ..., \pi_N)$, where $\pi_i = d_i/2|E(\Gamma)|$, $N = |V(\Gamma)|$ and $d_i$ denotes the degree of node $i$. Let $H$ represent the eigentime identity for random walks in $\Gamma$, which is defined as the expected time for a walker going from a node $i$ to another node $j$, chosen randomly from all nodes accordingly to the stationary distribution. That is,
\be
H = \sum_{j=1}^{N}\pi_j H_{ij}.
\ee
Note that $H$ is independent of the starting node. It is a global characteristic of the network and reflects the architecture of the whole network. By \cite{AF,LL}, $H$ can be expressed as
\be\label{eig}
H = \sum_{\lambda\not = 0}\frac{1}{\lambda},
\ee
where the sum is taken over all the nonzero eigenvalues of the normalized Laplacian matrix of $\Gamma$. 
For recent work on eigentime identities of flower networks with multiple branches and weighted scale-free triangulation networks, we refer the reader to \cite{XYY,DLC}.
We shall give an explicit formula of \eqref{eig} for hype-cubes.

\begin{prp}\label{eigentime}
The eigentime identity for random walks on the $n$-cube is given by
\be
H(\Gamma_n) = \sum_{k=1}^{n}\frac{n\binom{n}{k}}{2k}.
\ee 
\end{prp}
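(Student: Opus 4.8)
The plan is to apply the eigentime identity formula \eqref{eig}, which expresses $H$ as the sum of reciprocals of all nonzero eigenvalues of the normalized Laplacian matrix, and to substitute the explicit spectrum obtained in the previous section. From the factorization \eqref{poly} of the characteristic polynomial $g_n(\lambda)$, the eigenvalues of $\mathcal{L}_n$ are exactly $\lambda_k = 2k/n$ for $0 \leq k \leq n$, each occurring with multiplicity $\binom{n}{k}$. The only zero eigenvalue corresponds to $k = 0$ (consistent with $\Gamma_n$ being connected, so $0$ has multiplicity $\binom{n}{0} = 1$), and all remaining eigenvalues $\lambda_k$ for $1 \leq k \leq n$ are strictly positive.

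The key step is then purely a matter of bookkeeping: I would exclude the single zero eigenvalue and sum the reciprocals of the rest, weighting each distinct eigenvalue value by its multiplicity. Explicitly,
\[
H(\Gamma_n) = \sum_{\lambda \neq 0} \frac{1}{\lambda} = \sum_{k=1}^{n} \binom{n}{k} \cdot \frac{1}{2k/n} = \sum_{k=1}^{n} \frac{n\binom{n}{k}}{2k},
\]
which is precisely the claimed formula. Here the multiplicity $\binom{n}{k}$ accounts for the fact that the eigenvalue $2k/n$ repeats $\binom{n}{k}$ times, so its reciprocal must be counted that many times in the sum over all nonzero eigenvalues.

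There is essentially no obstacle here, since all the substantive work has already been done in establishing Theorem \ref{res} and the resulting spectrum \eqref{poly}; the proposition is an immediate corollary. The only point requiring a moment of care is confirming that the zero eigenvalue is simple and is the unique eigenvalue to be omitted, which follows because $\Gamma_n$ is connected (so the multiplicity of $0$ equals one) and because $2k/n > 0$ for every $k \geq 1$. I would therefore write the proof as a direct application: cite the spectrum from \eqref{poly}, invoke \eqref{eig}, and carry out the one-line substitution above to arrive at the stated expression for $H(\Gamma_n)$.
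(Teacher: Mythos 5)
Your proposal is correct and follows exactly the route the paper takes: the paper's proof is simply ``This follows directly from Theorem \ref{res},'' and your computation is the explicit bookkeeping behind that one-line citation, substituting the spectrum from \eqref{poly} into \eqref{eig} and omitting the simple zero eigenvalue. Nothing differs in substance; you have merely written out the details the paper leaves implicit.
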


\begin{proof}
This follows directly from Theorem \ref{res}.
\end{proof}

For the asymptotical behavior of the eigentime identity for random walks on $\Gamma_n$ as $n\to\infty$, we have the following characterization.

\begin{prp}\label{asy}
\[
\lim\limits_{n\to\infty}\frac{H(\Gamma_n)}{2^n} = 1.
\]
\end{prp}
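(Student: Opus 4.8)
The starting point is Proposition~\ref{eigentime}, which gives
\[
H(\Gamma_n) = \frac{n}{2}\,T_n, \qquad T_n := \sum_{k=1}^{n}\frac{1}{k}\binom{n}{k},
\]
so that $H(\Gamma_n)/2^n = nT_n/2^{n+1}$ and the whole statement reduces to the asymptotics of $T_n$. The plan is first to eliminate the awkward factor $1/k$ by an integral representation, convert $T_n$ into a clean closed form, and then read off the leading behaviour.

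For the closed form I would use $1/k = \int_0^1 x^{k-1}\,dx$. Interchanging the finite sum with the integral and recognizing the binomial theorem gives
\[
T_n = \int_0^1 \frac{1}{x}\Bigl(\sum_{k=1}^{n}\binom{n}{k}x^k\Bigr)\,dx = \int_0^1 \frac{(1+x)^n-1}{x}\,dx.
\]
Substituting $u = 1+x$ and expanding the finite geometric sum $\frac{u^n-1}{u-1} = \sum_{j=0}^{n-1}u^j$ then yields the clean identity
\[
T_n = \int_1^2 \frac{u^n-1}{u-1}\,du = \sum_{j=0}^{n-1}\frac{2^{j+1}-1}{j+1} = \sum_{m=1}^{n}\frac{2^m-1}{m}.
\]

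With this closed form in hand the asymptotics become transparent. Writing $T_n = U_n - V_n$ with $U_n = \sum_{m=1}^{n} 2^m/m$ and $V_n = \sum_{m=1}^{n} 1/m = O(\log n)$, the harmonic part contributes $nV_n/2^{n+1}\to 0$ and may be discarded. For the main term I reindex by $j = n-m$, obtaining
\[
\frac{n}{2^{n+1}}\,U_n = \frac{1}{2}\sum_{j=0}^{n-1}\frac{n}{\,n-j\,}\,2^{-j},
\]
and the goal is to show that this tends to $\tfrac12\sum_{j\ge 0}2^{-j} = 1$.

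The one genuine technical point — and the place where I expect the main obstacle — is justifying this termwise limit, since the coefficient $n/(n-j)$ is unbounded as $j\to n$ and plain dominated convergence does not apply over the whole range. I would split the sum at $j=\lfloor n/2\rfloor$. On $0\le j\le n/2$ one has $n-j\ge n/2$, hence $n/(n-j)\le 2$, so these terms are dominated by the summable sequence $2\cdot 2^{-j}$ and their sum tends to $\sum_{j\ge0}2^{-j}=2$. On $n/2<j\le n-1$ the crude bounds $n/(n-j)\le n$ and $2^{-j}\le 2^{-n/2}$ show that the tail is at most $\tfrac{n}{2}\cdot n\cdot 2^{-n/2}\to 0$. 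Combining the two ranges gives $\tfrac12\sum_{j=0}^{n-1}\tfrac{n}{n-j}2^{-j}\to 1$, and therefore $H(\Gamma_n)/2^n\to 1$, as claimed.
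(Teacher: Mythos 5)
Your argument is correct, and it is genuinely different from the paper's. The paper stays entirely inside binomial algebra: it splits $\frac{1}{k}=\frac{1}{k+1}+\frac{1}{k(k+1)}$, absorbs the denominators into shifted binomial coefficients to get the exact closed form $S(n)=\frac{n(2^{n+1}-n-2)}{2(n+1)}$ for the main term, and kills the remainder $L(n)$ by sandwiching it between multiples of a third sum $T(n)$ that also telescopes into a closed form. You instead use the integral representation $1/k=\int_0^1 x^{k-1}\,dx$ to convert $\sum_{k=1}^n\binom{n}{k}/k$ into the classical identity $\sum_{m=1}^n\frac{2^m-1}{m}$, and then extract the asymptotics by reindexing and a split-the-sum/Tannery (dominated convergence for series) argument. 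Your route buys an appealing exact reformulation $H(\Gamma_n)=\frac{n}{2}\sum_{m=1}^n\frac{2^m-1}{m}$ that the paper does not record, at the cost of a slightly more delicate limiting step; the paper's route is more elementary and purely finite, producing explicit closed forms at every stage with no interchange of limits to justify. Your handling of the one delicate point is sound: on $0\le j\le n/2$ the bound $n/(n-j)\le 2$ gives a summable dominating sequence so the partial sums converge to $\sum_{j\ge0}2^{-j}=2$, and the crude estimate $\frac{n^2}{2}2^{-n/2}\to0$ disposes of the range $n/2<j\le n-1$. Both proofs are complete.
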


\begin{proof}
We have 
$$H(\Gamma_n) = \sum_{k=1}^{n}\frac{n\binom{n}{k}}{2k} = \frac{n}{2}\sum_{k=1}^{n}\frac{1}{k(k+1)}\binom{n}{k} + \frac{n}{2}\sum_{k=1}^{n}\frac{1}{k+1}\binom{n}{k}.$$
Denote by 
\[
S(n) = \frac{n}{2}\sum_{k=1}^{n}\frac{1}{k+1}\binom{n}{k},\quad L(n) = \frac{n}{2}\sum_{k=1}^{n}\frac{1}{k(k+1)}\binom{n}{k}
\]
and
\[
T(n) = \frac{n}{2}\sum_{k=1}^{n}\frac{1}{(k+1)(k+2)}\binom{n}{k}.
\]
Then 
\be
S(n) = \frac{n}{2(n+1)}\sum_{k=1}^{n}\binom{n+1}{k+1} = \frac{n(2^{n+1}-n-2)}{2(n+1)},
\ee
thus
\be\label{mai}
\lim\limits_{n\to\infty}\frac{S(n)}{2^n} = 1.
\ee
On the other hand, 
\[
T(n) = \frac{n}{2(n+1)(n+2)}\sum_{k=1}^{n}\binom{n+2}{k+2} < \frac{n2^{n+1}}{(n+1)(n+2)},
\]
thus
\[
\lim\limits_{n\to\infty}\frac{T(n)}{2^n} = 0.
\]
For any $k\geq 1$, one has
\[
\frac{1}{(k+1)(k+2)} < \frac{1}{k(k+1)} \leq \frac{3}{(k+1)(k+2)},
\]
thus $T(n) < L(n) \leq 3T(n)$ and therefore 
\be\label{eps}
\lim\limits_{n\to\infty}\frac{L(n)}{2^n} = 0.
\ee
Note that $H(\Gamma_n) = L(n) + S(n)$, thus the proposition follows from \eqref{mai} and \eqref{eps}.
\end{proof}

By Proposition \ref{asy}, $H(\Gamma_n)$ grows linearly with the network size $N(\Gamma_n)$ of the $n$-cube as $n\to\infty$, which is quite different from the fractal trees and Cayley trees as studied in \cite{JWZ}. This indicates that the network structure of hype-cubes are essentially different from that of fractal trees and Cayley trees constructed in \cite{JWZ}.

\subsection{Number of spanning trees}

In addition to eigentime identity, the eigenvalues of normalized Laplacian matrix of a connected network also determine the number of its spanning trees. Recall that a spanning tree of an undirected graph $\Gamma$ is a subgraph of $\Gamma$ that is a tree which includes all the vertices of $\Gamma$. In general, a graph may have several spanning trees, but a graph that is not connected will not contain a spanning tree. By \cite{Chu,CZ}, the number of spanning trees $N_{st}(\Gamma)$ for a connected network $\Gamma$ is
\be\label{nst}
N_{st}(\Gamma) = \frac{\prod_{i = 1}^{N}d_i\prod_{\lambda\neq 0}\lambda}{\sum_{i = 1}^{N}d_i},
\ee
where $\lambda$ runs over all the nonzero eigenvalues of the normalized Laplacian matrix.

Denote by $N_{st}(\Gamma_n)$ the number of spanning trees of the $n$-cube $\Gamma_n$.

\begin{prp}\label{st}
\be
N_{st}(\Gamma_n) = 2^{2^{n}-n-1}\prod_{k=1}^{n}k^{\binom{n}{k}}.
\ee
\end{prp}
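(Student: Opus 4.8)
The plan is to apply the spanning-tree formula \eqref{nst} directly, using the complete eigenvalue data from Theorem~\ref{res} (equivalently \eqref{poly}), since for the $n$-cube every ingredient of \eqref{nst} is already explicit. First I would record the three elementary inputs: the $n$-cube $\Gamma_n$ has $N = 2^n$ vertices, each of degree $n$, so $\prod_{i=1}^{N} d_i = n^{2^n}$ and $\sum_{i=1}^{N} d_i = n\,2^n$. The nonzero eigenvalues of $\mathcal{L}_n$ are, by \eqref{poly}, the values $2k/n$ for $1 \le k \le n$, each occurring with multiplicity $\binom{n}{k}$ (the eigenvalue $0$ corresponding to $k=0$ is excluded).

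Next I would compute the product of the nonzero eigenvalues:
\[
\prod_{\lambda \neq 0}\lambda = \prod_{k=1}^{n}\left(\frac{2k}{n}\right)^{\binom{n}{k}} = \frac{2^{\,2^n-1}}{n^{\,2^n-1}}\prod_{k=1}^{n}k^{\binom{n}{k}},
\]
where the exponent $2^n - 1$ arises from the identity $\sum_{k=1}^{n}\binom{n}{k} = 2^n - 1$ applied to both the factors of $2$ in each numerator and the factors of $n$ in each denominator.

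Substituting these expressions into \eqref{nst} gives
\[
N_{st}(\Gamma_n) = \frac{n^{2^n}\cdot \dfrac{2^{\,2^n-1}}{n^{\,2^n-1}}\prod_{k=1}^{n}k^{\binom{n}{k}}}{n\,2^n},
\]
and I would finish by simplifying: the factor $n^{2^n}/n^{\,2^n-1} = n$ cancels against the $n$ in the denominator, while $2^{\,2^n-1}/2^n = 2^{\,2^n-n-1}$, yielding the claimed value $2^{\,2^n-n-1}\prod_{k=1}^{n}k^{\binom{n}{k}}$.

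There is no genuine obstacle here; the only point requiring care is the bookkeeping of the exponent $\sum_{k=1}^{n}\binom{n}{k}=2^n-1$ (remembering to omit the $k=0$ term), since both the overall power of $2$ and the cancelling power of $n$ hinge on this count. Everything else is a direct substitution into \eqref{nst} together with the eigenvalue multiplicities already established in Theorem~\ref{res}.
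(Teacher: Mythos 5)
Your proposal is correct and follows exactly the same route as the paper: substitute the degree data $\prod d_i = n^{2^n}$, $\sum d_i = n2^n$ and the nonzero eigenvalues $2k/n$ with multiplicities $\binom{n}{k}$ from Theorem~\ref{res} into \eqref{nst}, then simplify using $\sum_{k=1}^{n}\binom{n}{k}=2^n-1$. The paper compresses this into a single chain of equalities, while you spell out the intermediate bookkeeping, but the argument is identical.
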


\begin{proof}
By \eqref{nst} and Theorem \ref{res}, we have 
\[
N_{st}(\Gamma_n) = \frac{\prod d_i\prod_{\lambda\not = 0}\lambda}{\sum d_i} =\frac{n^{2^n}\prod_{k=1}^{n}(\frac{2k}{n})^{\binom{n}{k}}}{n2^n} = 2^{2^{n}-n-1}\prod_{k=1}^{n}k^{\binom{n}{k}}.
\]
\end{proof}

\section{Conclusions}

It is known that numerous structural and dynamical  properties of a networked system are encoded in eigenvalues and eigenvectors of its standard Laplacian matrix.
Compared to standard Laplacian matrices, the spectrum of normalized Laplacian matrices have received little attention. Recently, it was pointed out by \cite{JWZ} that
it is equally important to compute and analyze the normalized Laplacian spectrum. For example, the normalized Laplacian spectrum of a network is relevant in the topological aspects and 
random walk dynamics that is closely related to a large variety of other dynamical processes of the network.

Generally, it should be a difficult problem to determine explicitly the normalized Laplacian spectrum. However, we do believe that for vertex symmetric networks, especially for Cayley graph networks, the study of normalized Laplacian spectrum will be relatively easy, due to the symmetry of the network structure. 

In this paper, we have studied the eigenvalue problem of the normalized Laplacian matrices of the hype-cubes, a special kind of Cayley graph networks. We determined explicitly all the characteristic polynomials of the normalized Laplacian matrices of the hype-cubes by a recursive method. Particularly, these polynomials were factorized into products of monomials and the roots of these polynomials are elegantly distributed in the closed interval from 0 to 2. As an application of these results, we obtained explicitly the eigentime identity for random walks on these hype-cubes, which grows linearly with the network size.
This is in sharp contrast to fractal trees and Cayley trees as constructed in \cite{JWZ}. Since eigentime identity is an important quantity rooted in the inherent network topology, we conclude that the network structure of hype-cubes is essentially different from that constructed in \cite{JWZ}. Moreover, we derived the number of spanning trees of these hype-cubes through the normalized Laplacian spectrum.

{\bf Acknowledgments} \noindent This work was sponsored by the National Natural Science Foundation of China (NSFC) under Project No.61573119 and the Fundamental Research
Project of Shenzhen under Project Nos. JCYJ20170307151312215 and KQJSCX20180328165509766.


\begin{thebibliography}{99}

\bibitem{AB}
R. Albert and A. L. Barab{\'a}si, \textit{Statistical mechanics of complex networks}, Rev. Mod. Phys. {\bf 74}, 47--92 (2002).

\bibitem{Ne}
M. E. J. Newman, \textit{Networks: An introduction}. Oxford University Press, UK (2010).

\bibitem{GB}
A. A. Gurtovenko and A. Blumen, \textit{Generalized Gaussian structures: Models for polymer systems with complex topologies}, Adv. Polym. Sci. {\bf 182}, 171 (2005).

\bibitem{CD}
G. Chen and Z. Duan, \textit{Network synchronizability analysis: A graph-theoretic approach}, Chaos. {\bf 18}, 037102 (2008).

\bibitem{Wu}
F. Y. Wu, \textit{Theory of resistor networks: the two-point resistance}, J. Phys. A {\bf 37}, 6653 (2004).

\bibitem{SB}
J. U. Sommer and A. Blumen, \textit{On the statistics of generalized Gaussian structures: collapse and random external fields}, J. Phys. A {\bf 28}, 6669 (1995).

\bibitem{Sc}
H. Schiessel, \textit{Unfold dynamics of generalized Gaussian structures}, Phys. Rev. E. {\bf 57}, 5775 (1998).

\bibitem{BVJK}
A. Blumen, A. Volta, A. Jurjiu and Th. Koslowski, \textit{Monitoring energy transfer in hyperbranched macromolecules through fluorescence depolarization}, J. Lumin. {\bf 111}, 327 (2005).

\bibitem{BVJK1}
A. Blumen, A. Volta, A. Jurjiu and Th. Koslowski, \textit{Energy transfer and trapping in regular hyperbranched macromolecules}, Physica A {\bf 356}, 12 (2005).

\bibitem{LZ}
H. X. Liu and Z. Z. Zhang, \textit{Laplacian spectra of recursive treelike small-world polymer networks: Analytical solutions and applications}, J. Chem. Phys.  {\bf 138}, 114904 (2013).

\bibitem{ADZ}
Y. Aharonov, L. Davidovich and N. Zagury, \textit{Quantum random walks}, Phys. Rev. A {\bf 48}, 1687 (1993).

\bibitem{ABM}
E. Agliari,  A. Blumen and O. M{\"u}lken, \textit{Dynamics of continuous-time quantum walks in restricted geometries}, J. Phys. A {\bf 41}, 445301 (2008).

\bibitem{MB}
O. M{\"u}lken and A. Blumen, \textit{Continuous-time quantum walks: Models for coherent transport on complex networks}, Phys. Rep. {\bf 502}, 37 (2011).

\bibitem{DFW}
M. Dai, W. Feng, X. Wu, H. Chi, P. Li and W. Su, \textit{The Laplacian spectrum and average trapping time for weighted Dyson hierarchical network}, Physica A {\bf 515} (2019) 510--518.

\bibitem{BJ}
A. Blumen and  A. Jurjiu, \textit{Multifractal spectra and the relaxation of model polymer networks}, J. Chem. Phys. {\bf 116}, 2636 (2002).

\bibitem{JVB}
A. Jurjiu, A. Volta and T. Beu, \textit{Relaxation dynamics of a polymer network modeled by a multihierarchical structure}, Phys. Rev. E {\bf 84}, 011801 (2011).

\bibitem{Ga}
M. Galiceanu, \textit{Relaxation of polymers modeled by generalized Husimi cacti}, J. Phys. A {\bf 43}, 305002 (2010).

\bibitem{CLV}
F. Chung, L. Lu and V. Vu, \textit{Spectra of random graphs with given expected degrees}, Proc. Natl. Acad. Sci. U.S.A.  {\bf 100}, 6313 (2003).

\bibitem{KK}
D. Kim and B. Kahng, \textit{Spectral densities of scale-free networks}, Chaos.  {\bf 17}, 026115 (2007).

\bibitem{WZ}
S. Q. Wu and Z. Z. Zhang, \textit{Eigenvalue spectrum of transition matrix of dual Sierpinski gaskets and its applications}, J. Phys. A: Math. Theor.  {\bf 45}, 345101 (2012).

\bibitem{Chu}
F. Chung, Spectral graph theory, American Mathematical society, Providence, RI, 1997.

\bibitem{CZ}
H. Y. Chen and F. J. Zhang, \textit{Resistance distance and the normalized Laplacian spectrum}, Discrete. Appl. Math. {\bf 155}, 654 (2007).

\bibitem{AF}
D. Aldous and J. Fill, \textit{Reversible Markov chains and random walks on graphs}, 1999, see http://www.stat.berkeley.edu/~aldous/RWG/book.html.

\bibitem{KS}
J. G. Kemeny and  J. L. Snell, Finite Markov Chains, Springer, New York,1976.

\bibitem{XY}
L. Xi and Q. Ye, \textit{Eigentime identities of potting networks}, Physica A {\bf 526 } (2019) 120934.

\bibitem{BKK}
A. Bar-Haim, J. Klafter and R. Kopelman, \textit{Dendrimers as controlled artificial energy antennae}, J. Am. Chem. Soc.  {\bf 119}, 6197 (1997).

\bibitem{BZ}
A. Blumen and G. Zumofen, \textit{Energy transfer as a random walk on regular lattices}, J. Chem. Phys.  {\bf 75}, 892 (1981).

\bibitem{JWZ}
A. Julaiti, B. Wu and Z. Zhang, \textit{Eigenvalues of normalized Laplacian matrices of fractal trees and dendrimers: Analytic results and applications}, J. Chem. Phys. {\bf 138}, 204116 (2013); doi: 10.1063/1.4807589.

\bibitem{DWS}
M. Dai, X. Wang, Y. Q. Sun, Y. Sun and W. Su, \textit{Eigentime identities for random walks on a family of treelike networks and polymer networks}, Physica A {\bf 484} (2017) 132--140.

\bibitem{AK}
S. Akers and B. Krishnamurthy, \textit{A group-theoretic model for symmetric interconnection networks}, IEEE Trans. on Comp. {\bf 38}, No. 4, 1989.

\bibitem{Big}
N. Biggs, Algebraic Graph Theory, Cambridge University Press, Cambridge, 1993.

\bibitem{Hal}
F. J. Hall, \textit{The adjacency matrix, standard Laplacian, and normalized Laplacian, and some eigenvalue interlacing results}, http://www2.cs.cas.cz/semincm/lectures/2010-04-13-Hall.pdf.

\bibitem{LL}
M. Levene and G. Loizou, \textit{Kemeny's constant and the random surfer}, Am. Math. Monthly  {\bf 109}, 741 (2002).

\bibitem{XYY}
L. Xi, Q. Ye, J. Yao and B. Sun, \textit{Eigentime identities of flower networks with multiple branches}, Physica A {\bf 526 } (2019) 120857.

\bibitem{DLC}
M. Dai, J. Liu, J. Chang, D. Tang, T. Ju, Y. Sun and W. Su, \textit{Eigentime identity of the weighted scale-free triangulation networks for weight-dependent walk}, Physica A {\bf 513} (2019) 202--209.

\end{thebibliography}
\end{document}